\documentclass[oneside,english]{amsart}
\usepackage[T1]{fontenc}
\usepackage[latin9]{inputenc}
\usepackage{amsmath,amsthm,amssymb,latexsym,epic,eepic,bbm,enumerate}
\usepackage{babel}
\usepackage[all]{xy}
\usepackage[parfill]{parskip}

\usepackage[active]{srcltx}
\usepackage[notref,notcite]{showkeys}

\newtheorem{theorem}{Theorem}
\newtheorem{lemma}[theorem]{Lemma}
\newtheorem{corollary}[theorem]{Corollary}
\numberwithin{equation}{section}

\theoremstyle{plain}

\theoremstyle{definition}

\theoremstyle{remark}

\newcommand{\K}{\Bbbk}

\newcommand{\eff}{\ensuremath{\mathrm{eff.dim}_{\K}}}

\newcommand{\e}{\varepsilon}

\newcommand{\set}[1]{\ensuremath{\{#1\}}}

\begin{document}
\title{Effective representations of Path Semigroups}
\author{Love Forsberg}

\begin{abstract}
We give a formula which determines the minimal effective dimensions of path semigroups and truncated path 
semigroups over an uncountable field of characteristic zero. 
\end{abstract}\maketitle

\section{Introduction and preliminaries}\label{s0}

Let $S$ be a semigroup and $\Bbbk$ a fixed field. In the paper \cite{MS} Mazorchuk and Stienberg addressed the 
question of dermining the so-called {\em minimal effective dimension} $\mathrm{eff.dim}_{\Bbbk}(S)$ of $S$ over 
$\Bbbk$, that is the minimal $m$ (a positive integer or infinity) for which there is an injective homomorphism 
from $S$ to the semigroup  $\mathrm{Mat}_{m\times m}(\Bbbk)$ of all $m\times m$ matrices with coefficients in 
$\Bbbk$. If $S$ is finite, it is clear that $\mathrm{eff.dim}_{\Bbbk}(S)<\infty$, more precisely,
$\mathrm{eff.dim}_{\Bbbk}(S)\leq|S|+1$, which is the dimension of the regular representation of the semigroup
$S^1$ obtained from $S$ by formally adjoining an identity element $1$. 
An {\em effective representation} of a semigroup $S$ with $\mathrm{eff.dim}_{\Bbbk}(S)=m$ is an injective homomorphism $S\to \mathrm{Mat}_{m\times m}(\Bbbk)$.

One of the examples considered in \cite{MS} was that of {\em truncated path semigroups} which we now define.
Let $Q=(Q_0,Q_1,h,t)$ be a {\em quiver}, where $Q_0$ is the set of vertices, $Q_1$ is the set of arrows,
$h:Q_1\to Q_0$ is the function assigning to each arrow its {\em head} and 
$t:Q_1\to Q_0$ is the function assigning to each arrow its {\em tail}. Denote by $\mathcal{P}$ the set
of all oriented paths in $Q$ (including the trivial path $\varepsilon_x$ at each vertex $x\in Q_0$ and the
zero path $\mathtt{z}$). Then $\mathcal{P}$ has the natural structure of a semigroup under the usual 
concatenation of oriented paths (in case two paths cannot be concatenated, their product is postulated to be 
the path $\mathtt{z}$ and the latter is the zero element of $\mathcal{P}$). The semigroup 
$\mathcal{P}$ is called the {\em path semigroup} of $Q$. The semigroup $\mathcal{P}$ is finite if and only if the
quiver $Q$ is finite and does not have oriented cycles. We write $\mathcal{P}^*$ for the set
$\mathcal{P}\setminus \{\mathtt{z}\}$ of non-zero paths.

There is a unique function $\mathfrak{l}:\mathcal{P}^*\to\{0,1,2,\dots\}$, called the {\em path length}, having 
the properties that the length of each arrow is $1$ and $\mathfrak{l}(pq)=\mathfrak{l}(p)+\mathfrak{l}(q)$ 
whenever $p,q,pq\in \mathcal{P}^*$ (note that $\mathfrak{l}(\varepsilon_x)=0$ for each $x\in Q_0$). 
Elements of $Q_1$ thus can be identified with all paths of length $1$. Let $J\subset \mathcal{P}$ be the 
two-sided ideal of $\mathcal{P}$ generated by $Q_1$. For every $N\in\{1,2,3,\dots\}$ we define
the \emph{truncated path semigroup} as $\mathcal{P}_N:=\mathcal{P}/J^N$. Note that the semigroup $\mathcal{P}_N$ is finite whenever $Q$ is. In \cite[Subsection~8.1]{MS} one finds a formula for the effective dimension of $\mathcal{P}_N$ in the case when every vertex in $Q$ appears in some oriented cycle (or loop). The aim of this paper is give a formula for the effective dimension of $\mathcal{P}_N$ for any $Q$. 

From now on we assume that $Q$ is finite and set $n=|Q_0|$. Consider the path algebra $\Bbbk[Q]$ of $Q$ 
which is the $\mathtt{z}$-reduced semigroup algebra of $\mathcal{P}$ over $\Bbbk$. The algebra $\Bbbk[Q]$ is 
unital with unit element $1=\sum_{x\in Q_0}\varepsilon_x$ where $\varepsilon_x$ are pairwise orthogonal
idempotents. This implies that any $\Bbbk[Q]$-module $V$ splits as a direct sum of vector spaces 
\[V=\bigoplus_{x\in Q_0}V_x,\]
where $V_x=\varepsilon_x V$. Given a $\Bbbk[Q]$-module $V$, we set $D_x=\dim(V_x)$. Each arrow
$\alpha:x\to y$ acts as zero on all $V_z$ such that $z\neq x$ and hence is uniquely determined by the 
induced linear map from $V_x$ to $V_y$. Hence we can make the following convention: A matrix representation of 
$\mathcal{P}$ or $\mathcal{P}_N$ is an assignment to each arrow $\alpha\in Q_1$ a $D_y\times D_x$-matrix 
with coefficients in $\Bbbk$ representing the  action of $\alpha$ in fixed bases of $V_x$ and $V_y$. For more details on representations of quivers we refer the reader to \cite{GR}.

Note that $\mathcal{P}_N$-modules are exactly $\mathcal{P}$-modules annihilated by $J^N$. We will usually denote 
$\mathcal{P}$- or $\mathcal{P}_N$-modules by  $V$ and the corresponding representation by $R$. 

\section{Path semigroups}\label{s1}

In \cite[Section~8]{MS} one finds formulae for effective dimension of path semigroups over $\Bbbk$ in case
of acyclic $Q$ and algebraically closed $\Bbbk$. In this section we determine the effective dimension of 
path semigroups for all finite quivers at the expense of assuming $\Bbbk$ to be a field 
containing an infinite purely transcendental extension of its prime subfield (for example,
$\mathbb{R}\subset\Bbbk$). We denote by $\mathbb{N}$ the set of positive integers and by 
$\mathbb{N}_0$ the set of non-negative integers.

For $x\in Q_0$ let $\mathcal{P}_x$ denote the set of all paths in $\mathcal{P}^*$ which start and terminate
at $x$. Then $\mathcal{P}_x$ is a subsemigroup of $\mathcal{P}$, in fact, $\mathcal{P}_x$ is a monoid
with identity $\varepsilon_x$. Denote by $A$ the set of all vertices $x\in Q_0$ for which
$\mathcal{P}_x$ is not commutative and set $B:=Q_0\setminus A$.

\begin{lemma}\label{lem101}
Let $x\in Q_0$.
\begin{enumerate}[$($i$)$]
\item\label{lem101.1} The monoid $\mathcal{P}_x$ has a unique irreducible generating system (which we 
denote by $M_x$).
\item\label{lem101.2} The monoid $\mathcal{P}_x$ is free over $M_x$.
\item\label{lem101.3} The monoid $\mathcal{P}_x$ is commutative if and only if $|M_x|\leq 1$. 
\end{enumerate}
\end{lemma}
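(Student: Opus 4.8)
The plan is to establish the free structure first and then read off (i) and (iii) as consequences. The crucial tool is the length grading: since $\mathfrak{l}$ is additive and the only closed path at $x$ of length $0$ is $\varepsilon_x$, a relation $pq=\varepsilon_x$ forces $\mathfrak{l}(p)=\mathfrak{l}(q)=0$, so $\varepsilon_x$ is the only invertible element of $\mathcal{P}_x$. Call $p\in\mathcal{P}_x\setminus\{\varepsilon_x\}$ \emph{irreducible} if it is not a product of two non-identity elements of $\mathcal{P}_x$, and let $M_x$ denote the set of irreducible elements. I would first characterize $M_x$ geometrically: a closed path $p$ at $x$ is irreducible precisely when it returns to $x$ only at its two endpoints, i.e. no proper non-trivial prefix of $p$ is again a closed path at $x$. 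Indeed, any intermediate visit to $x$ splits $p$ into two shorter closed paths at $x$, showing $p$ is reducible; conversely such a first-return loop admits no splitting inside $\mathcal{P}_x$.

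For part (ii) I would prove that the concatenation map $M_x^{*}\to\mathcal{P}_x$ from the free monoid on $M_x$ is an isomorphism. Given a closed path $p$ of positive length, consider the positions $0=i_0<i_1<\dots<i_k=\mathfrak{l}(p)$ at which $p$ stands at the vertex $x$; cutting $p$ at these positions writes it as $p=p_1p_2\cdots p_k$, where each $p_j$ runs between consecutive visits to $x$ and is therefore a first-return loop, hence an element of $M_x$. This gives surjectivity (equivalently, that $M_x$ generates $\mathcal{P}_x$). Injectivity follows because the cut positions are intrinsic to $p$: in any factorization of $p$ into elements of $M_x$ the boundaries between consecutive factors are exactly the positions where the path sits at $x$, since no element of $M_x$ visits $x$ internally; hence the partial sums of the factor lengths coincide with $i_0,\dots,i_k$, which forces the factors and their number to agree. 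This unique factorization is precisely freeness. Part (i) is then immediate: in a free monoid the basis $M_x$ is exactly the set of irreducibles and it generates $\mathcal{P}_x$, while any generating set must contain every irreducible (writing an irreducible as a product of generators leaves, after deleting copies of $\varepsilon_x$, a single factor which must be the element itself). Thus $M_x$ is the unique irreducible generating system.

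Finally, for part (iii), if $|M_x|\le 1$ then $\mathcal{P}_x$ is either the trivial monoid $\{\varepsilon_x\}$ or the free monoid on a single generator, which is isomorphic to $(\mathbb{N}_0,+)$ and hence commutative. If instead $M_x$ contains two distinct irreducibles $a\ne b$, then by the freeness just established the words $ab$ and $ba$ are distinct elements of $\mathcal{P}_x$, so $\mathcal{P}_x$ is non-commutative. I expect the only genuine work to lie in the freeness argument of part (ii): the abstract monoid theory is routine, and the substance is the combinatorial fact that cutting a closed path at its visits to $x$ produces a forced factorization into first-return loops, so that distinct words over $M_x$ spell out distinct paths.
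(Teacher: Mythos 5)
Your proof is correct, but it is organized differently from the paper's and rests on a different key mechanism. The paper first builds $M_x$ by a length filtration (all closed paths of length $1$, then those of length $2$ not generated by the previous ones, and so on), observes that this set generates and lies inside every generating system, and only then proves freeness by a minimal-counterexample cancellation argument: given a shortest relation $a_1\cdots a_k=b_1\cdots b_l$ between words in $M_x$, it compares the lengths of the last factors $a_k$ and $b_l$; if they differed, the longer one would factor through the shorter (both being terminal segments of the same path), contradicting irreducibility, so $a_k=b_l$ and one cancels, contradicting minimality. You instead prove freeness first and directly: you characterize the irreducible elements geometrically as first-return loops (closed paths meeting $x$ only at their endpoints) and show that any factorization of a closed path into such loops must cut the path exactly at its internal visits to $x$, so the factorization is unique; claims (i) and (iii) then fall out formally. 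The two arguments exploit the same underlying fact (additivity of length and the rigidity of suffixes of a fixed path), but yours buys a concrete description of $M_x$ -- precisely the ``minimal oriented cycles'' the paper names after the lemma -- and a constructive unique-factorization statement, while the paper's filtration-plus-cancellation route is shorter to write and avoids having to argue that the set of cut positions is intrinsic. Both are complete; your derivation of (i) from (ii) (every generating set contains every irreducible, since writing an irreducible as a product of generators and deleting identity factors leaves a single factor) is a clean replacement for the paper's observation that the filtration sets $N_i$ must lie in any generating system.
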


\begin{proof}
Define $N_i$ and $\tilde{N}_i$ for $i\in\mathbb{N}$ recursively as follows: 
\begin{itemize}
\item $N_1$ is the set of all paths of length $1$ in $\mathcal{P}_x$;
\item $\tilde{N}_1$ is the subsemigroup of $\mathcal{P}_x$ generated by $N_1$;
\item $N_2$ is the set of all paths of length $2$ in $\mathcal{P}_x\setminus \tilde{N}_1$;
\item $\tilde{N}_2$ is the subsemigroup of $\mathcal{P}_x$ generated by $N_1\cup N_2$;
\item $N_3$ is the set of all paths of length $3$ in $\mathcal{P}_x\setminus \tilde{N}_2$;
\item $\tilde{N}_3$ is the subsemigroup of $\mathcal{P}_x$ generated by $N_1\cup N_2\cup N_3$;
\item and so on.
\end{itemize}
From this definition it is clear that the set $M_x=N_1\cup N_2\cup\dots$ is a generating system of 
$\mathcal{P}_x$ (as a monoid) and that it is included in every generating system of 
$\mathcal{P}_x$ (as a monoid). Claim~\eqref{lem101.1} follows.

Assume that $\mathcal{P}_x$ is not free over $M_x$. Then there exist $a_1,a_2,\dots,a_k,b_1,b_2,\dots,b_l\in M_x$ 
such that $a_1a_2\cdots a_k=b_1b_2\cdots b_l$. This can be chosen such that $(k,l)$ is minimal possible
with respect to the lexicographic order (note that, obviously, both $k,l>0$). If $\mathfrak{l}(a_k)<\mathfrak{l}(b_l)$, 
then $b_l=ta_k$ for some $t\in \mathcal{P}_x$ which contradicts $b_l\in M_x$. Therefore this case is not possible. 
Similarly $\mathfrak{l}(a_k)>\mathfrak{l}(b_l)$ is not possible. This means that $\mathfrak{l}(a_k)=\mathfrak{l}(b_l)$ 
and hence $a_k=b_l$. Therefore $a_1a_2\dots a_{k-1}=b_1b_2\dots b_{l-1}$ which contradicts minimality of $(k,l)$.
This proves claim~\eqref{lem101.2} and claim~\eqref{lem101.3} follows directly from claim~\eqref{lem101.2}. 
\end{proof}

A generator of $\mathcal{P}_x$ will be called a {\em minimal oriented cycle} starting at $x$

\begin{lemma}\label{lem102}
Let $V$ be an effective $S$-module and $x\in A$. Then $D_x\geq 2$ and 
\[\eff(S)\geq 2|A|+|B|=|A|+n.\]
\end{lemma}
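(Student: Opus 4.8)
The plan is to localise the whole question to the diagonal blocks $V_x$, where $S$ denotes the path semigroup $\mathcal{P}$ and $R$ its representation on $V$. First I would dispose of the easy bound $D_x\ge 1$ for every $x\in Q_0$: in an effective module the zero path $\mathtt z$ acts as the zero operator, so if $D_x=\dim V_x=0$ then $\varepsilon_x$ acts as $0$ as well, giving $R(\varepsilon_x)=R(\mathtt z)$ although $\varepsilon_x\ne\mathtt z$ in $\mathcal P$, which contradicts injectivity of $R$. Hence $D_x\ge 1$ always.

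The heart of the matter is the inequality $D_x\ge 2$ for $x\in A$. The key observation is that every $p\in\mathcal P_x$ has both head and tail equal to $x$, so $R(p)$ sends $V_x$ into $V_x$ and annihilates every $V_z$ with $z\ne x$; consequently $R(p)$ is completely determined by its restriction to $V_x$, and $p\mapsto R(p)|_{V_x}$ is a monoid homomorphism $\mathcal P_x\to\mathrm{End}(V_x)$. Suppose for contradiction that $D_x=1$. Then the multiplicative monoid $\mathrm{End}(V_x)\cong\K$ is commutative. Since $x\in A$, Lemma~\ref{lem101}\eqref{lem101.3} gives $|M_x|\ge 2$, so I may pick two distinct minimal oriented cycles $a,b\in M_x$; by freeness (Lemma~\ref{lem101}\eqref{lem101.2}) the paths $ab$ and $ba$ are distinct nonzero elements of $\mathcal P$. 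However $R(ab)$ and $R(ba)$ each act on $V_x$ as the product of the scalars $R(a)|_{V_x}$ and $R(b)|_{V_x}$, which agree because $\K$ is commutative, and as $0$ on every other block; whence $R(ab)=R(ba)$. This contradicts injectivity of $R$, so $D_x\ge 2$.

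It remains to assemble the dimension bound. Since $V=\bigoplus_{x\in Q_0}V_x$, the size of the representing matrices is $\dim V=\sum_{x\in Q_0}D_x$. Using $D_x\ge 2$ for $x\in A$ and $D_x\ge 1$ for $x\in B$ gives $\dim V\ge 2|A|+|B|$. As this holds for every effective module, in particular for one whose dimension equals $\eff(S)$, we obtain $\eff(S)\ge 2|A|+|B|$, and $2|A|+|B|=|A|+n$ because $|A|+|B|=|Q_0|=n$. I expect the only genuinely delicate point to be the localisation step: one must argue carefully that the action of the non-commutative monoid $\mathcal P_x$ factors through $\mathrm{End}(V_x)$ and that the witnessing relation $ab\ne ba$ really persists between distinct elements of $\mathcal P$, the latter being exactly what the freeness in Lemma~\ref{lem101} guarantees.
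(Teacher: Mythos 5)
Your proof is correct and takes essentially the same approach as the paper: both localise to the block $V_x$, note that the action of $\mathcal{P}_x$ on $V$ is determined by its restriction to $V_x$, and derive a contradiction from the commutativity of endomorphisms of a one-dimensional space versus the non-commutativity of $\mathcal{P}_x$, before summing the block dimensions. Your detour through Lemma~\ref{lem101} to produce explicit non-commuting witnesses $ab\neq ba$ is a harmless elaboration of the paper's direct appeal to the definition of $A$.
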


\begin{proof}
It is clear that $D_x\geq 1$ for all $x\in Q_0$ (for otherwise the actions of $\e_x$ and $\mathtt{z}$ would coincide).
Assume $x\in A$ and $D_x=1$. Then $\mathcal{P}_x$ acts effectively acts on the $1$-dimensional vector space $V_x$.
However, the semigroup of linear endomorphisms of $V_x$ is commutative (as $V_x$ is one dimensional), while
$\mathcal{P}_x$ is not (as $x\in A$), a contradiction. This implies
that $D_x>1$ for $x\in A$, that is $D_x\geq 2$. As $|A|+|B|=|Q_0|=n$, the claim of the lemma follows.
\end{proof}

To prove that the bound given by Lemma~\ref{lem102} is sharp, we will need the following construction:
For a fixed positive integer $k$ consider the alphabet $A=\{a_1,a_2,\dots,a_k\}$ and the free monoid
$A^*$ of all finite words over $A$ with respect to concatenation of words. Let $\Bbbk_k$ be the purely 
transcendental extension of its prime subfield $\mathbb{K}$ with basis 
$\mathbf{B}:=\{\tau_{i},\eta_{i},\zeta_{i}\,|\,i=1,2,\dots,k\}$.

\begin{lemma}\label{lem105}
There is a unique representation $R:A^*\to \mathrm{Mat}_{2\times 2}(\Bbbk_k)$ such that 
\begin{displaymath}
R(a_i)= \left(\begin{array}{cc}\tau_{i}&\eta_{i}\\0&\zeta_{i}\end{array}\right),
\end{displaymath}
moreover, the map $R$ is injective.
\end{lemma}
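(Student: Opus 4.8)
The plan is to first establish existence and the homomorphism property, which is essentially automatic, and then concentrate all the effort on injectivity, which is the real content. The map $R$ is forced on generators by the displayed formula, and since $A^*$ is free over $\{a_1,\dots,a_k\}$ (Lemma~\ref{lem101}, or just the universal property of the free monoid), there is a unique monoid homomorphism extending it, provided the target matrices all lie in a submonoid on which everything is consistent. I would note that all the matrices $R(a_i)$ are upper-triangular, so their products stay upper-triangular; writing $R(w)=\twistmat{p(w)}{q(w)}$ for a word $w$ (here I am reusing the paper's \texttt{twistmat} macro, with bottom-right entry adjusted — in fact the bottom row is $(0,s(w))$ with $s(w)$ a monomial in the $\zeta_i$), the multiplicativity gives explicit recursions for the diagonal and off-diagonal entries.

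The diagonal entries are easy to compute: for a word $w=a_{i_1}a_{i_2}\cdots a_{i_m}$ the top-left entry is the monomial $\tau_{i_1}\tau_{i_2}\cdots\tau_{i_m}$ and the bottom-right entry is $\zeta_{i_1}\zeta_{i_2}\cdots\zeta_{i_m}$. These are monomials in commuting transcendentals, so they record the multiset of letters used but \emph{not} their order. The whole point of the $\eta_i$ entry is therefore to recover the ordering. Expanding the product of upper-triangular matrices, the off-diagonal entry of $R(w)$ is
\[
\sum_{j=1}^{m}\Bigl(\prod_{r<j}\tau_{i_r}\Bigr)\,\eta_{i_j}\,\Bigl(\prod_{r>j}\zeta_{i_r}\Bigr),
\]
a sum of $m$ distinct monomials, one for each position $j$, where each monomial encodes \emph{how many letters precede position $j$ (via the $\tau$ block) and how many follow it (via the $\zeta$ block)}, together with the identity $\eta_{i_j}$ of the letter in that position.

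Injectivity then follows by a monomial-comparison argument, which I expect to be the main obstacle only in that it requires care rather than cleverness. Suppose $R(w)=R(w')$ with $w=a_{i_1}\cdots a_{i_m}$ and $w'=a_{j_1}\cdots a_{j_{m'}}$. Equality of the diagonal monomials forces $m=m'$ (compare total degree) and forces the multisets of letters to agree. Because $\mathbf{B}$ is a transcendence basis, the elements $\tau_i,\eta_i,\zeta_i$ are algebraically independent, so two polynomials in them are equal iff they agree monomial-by-monomial. Equating the off-diagonal entries, the summand coming from position $p$ of $w$ is the monomial $\tau_{i_1}\cdots\tau_{i_{p-1}}\,\eta_{i_p}\,\zeta_{i_{p+1}}\cdots\zeta_{i_m}$, and the key observation is that this monomial is determined by, and determines, both the position $p$ (read off from the $\tau$-degree, which equals $p-1$) and the letter $i_p$ (read off from which $\eta$ appears). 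Hence matching the monomials of $R(w)$ against those of $R(w')$ gives $i_p=j_p$ for every $p$, i.e.\ $w=w'$. The only subtlety to flag is the degenerate terms at $p=1$ and $p=m$ (empty $\tau$- or $\zeta$-products), but these are handled uniformly by the $\tau$-degree bookkeeping; once algebraic independence of $\mathbf{B}$ is invoked, no monomials can accidentally collide.
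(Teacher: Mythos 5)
Your proof is correct and follows essentially the same route as the paper: both arguments get existence and uniqueness from the freeness of $A^*$ and then show injectivity by expanding the $(1,2)$-entry of $R(a_{i_1}\cdots a_{i_m})$ as the sum $\sum_{j}\tau_{i_1}\cdots\tau_{i_{j-1}}\eta_{i_j}\zeta_{i_{j+1}}\cdots\zeta_{i_m}$ and observing that it determines the sequence $i_1,\dots,i_m$. Your write-up merely makes explicit the bookkeeping the paper leaves implicit (the $\tau$-degree recovers the position and the $\eta$-index recovers the letter, with algebraic independence ruling out collisions), which is a welcome elaboration rather than a different approach.
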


\begin{proof}
Existence and uniqueness of $R$ follows from the fact that  $A^*$ is free over $A$. For 
$a_{i_1}a_{i_2}\dots a_{i_l}\in A^*$ the coefficient in the first row and second column
of the matrix $R(a_{i_1}a_{i_2}\dots a_{i_l})$ equals
\begin{displaymath}
\sum_{i=1}^l \tau_{1}\tau_{2}\cdots \tau_{i-1}\eta_i\zeta_{i+1}\zeta_{i+2}\cdots\zeta_{l}.
\end{displaymath}
This uniquely determines the sequence $i_1,i_2,\dots,i_l$ and the claim about injectivity follows.
\end{proof}

For a fixed $Q$ let $\Bbbk_Q$ be the purely transcendental extension of its prime subfield $\mathbb{K}$ with basis 
$\mathbf{B}:=\{\tau_{\alpha},\eta_{\alpha},\zeta_{\alpha}\,|\,\alpha\in Q_1\}$. 
For  $\alpha\in Q_1$ set $\mathbf{B}_{\alpha}:=\{\tau_{\alpha},\eta_{\alpha},\zeta_{\alpha}\}$.
For $x,y\in Q_0$ write $x\sim y$ if $x=y$ or there is an  oriented path from $x$ to $y$ 
as well as an oriented path from $y$ to $x$.

\begin{lemma}\label{lem107}
Let $x,y\in Q_0$ be such that $x\sim y$. Then $x\in A$ if and only if $y\in A$.
\end{lemma}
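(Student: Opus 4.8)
The plan is to use the commutativity criterion of Lemma~\ref{lem101}, namely that $x\in A$ precisely when $|M_x|\ge 2$, together with the observation that both the relation $\sim$ and the assertion ``$x\in A\iff y\in A$'' are symmetric in $x$ and $y$. It therefore suffices to prove one implication: if $x\sim y$ and $y\in B$ (so $\mathcal{P}_y$ is commutative), then $x\in B$; swapping the roles of $x$ and $y$ then yields the full equivalence. The case $x=y$ is trivial, so I assume $x\neq y$. As $x\sim y$, concatenating an oriented path $y\to x$ with one $x\to y$ produces a nontrivial closed path at $y$, so $\mathcal{P}_y\neq\{\varepsilon_y\}$; commutativity then forces $|M_y|=1$, say $M_y=\{d\}$, and by the freeness statement of Lemma~\ref{lem101} one has $\mathcal{P}_y=\langle d\rangle$. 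In particular every nontrivial closed path at $y$ is a power $d^{m}$ with $m\ge 1$, and hence has length at least $\mathfrak{l}(d)$.

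Next I would show that $d$ is a \emph{simple} directed cycle. First, $d$ cannot return to $y$ at an interior vertex, for otherwise $d$ factors as a product of two nontrivial closed paths at $y$, contradicting the irreducibility of $d\in M_y$. Second, suppose $d$ visited some vertex $v\neq y$ twice; writing $d=d_1d_2d_3$ with $d_1\colon y\to v$, with $d_2$ a nontrivial closed path at $v$, and with $d_3\colon v\to y$, the concatenation $d_1d_3$ is a nontrivial closed path at $y$ of length $\mathfrak{l}(d)-\mathfrak{l}(d_2)<\mathfrak{l}(d)$, which is impossible. Hence $d$ runs through distinct vertices $y=v_0,v_1,\dots,v_{L-1}$ and back to $v_0$.

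I would then identify the strongly connected component $C=\{v:v\sim y\}$ with this cycle. For any $v\in C$ a round trip along some $y\to v$ followed by some $v\to y$ is a nontrivial closed path at $y$, hence equals a power $d^{m}$; since the vertices occurring in $d^{m}$ are exactly those of $d$, the vertex $v$ lies on the cycle. Moreover, there can be no further arrow among vertices of $C$: given any arrow $\alpha$ out of $v_i$ that is not the cycle arrow $v_i\to v_{i+1}$, the closed path at $y$ obtained by following $d$ from $y$ to $v_i$, then $\alpha$, then $d$ back to $y$ already disagrees with every power $d^{m}$ in the $(i{+}1)$-st arrow, so it is not a power of $d$, a contradiction (this also excludes arrows parallel to a cycle arrow). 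Thus the subquiver induced on $C$ is exactly the simple cycle $d$, every closed path at a vertex $v\in C$ is a repetition of the unique cycle through $v$, and so $\mathcal{P}_v$ is free on one generator and commutative. Since $x\in C$, this gives $x\in B$, which completes the implication and hence the lemma.

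I expect the main obstacle to be this last combinatorial step: converting the algebraic hypothesis ``every closed path at $y$ is a power of $d$'' into the rigid structural conclusion that $C$ carries no arrows beyond the cycle. This relies on the freeness of concatenation recorded in Lemma~\ref{lem101} --- two paths coincide if and only if they are the same sequence of arrows --- together with careful bookkeeping of path lengths and of the arrow occupying a prescribed position, so that any deviating or parallel arrow is detected as a word mismatch with every power of $d$.
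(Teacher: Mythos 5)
Your proof is correct, but it takes a genuinely different route from the paper's. The paper proves the implication $x\in A\Rightarrow y\in A$ directly and in two lines: picking two \emph{distinct} minimal oriented cycles $\omega_1,\omega_2$ at $x$ (they exist by Lemma~\ref{lem101}) and connecting paths $\omega_{xy}:x\to y$, $\omega_{yx}:y\to x$, it observes that $\omega_{xy}\omega_1\omega_{yx}$ and $\omega_{xy}\omega_2\omega_{yx}$ are two non-commuting elements of $\mathcal{P}_y$, whence $y\in A$; symmetry finishes the argument. (The non-commutativity claim is left to the reader there; checking it requires exactly the kind of cancellation of common prefixes and suffixes in arrow sequences that you carry out explicitly.) You instead prove the contrapositive $y\in B\Rightarrow x\in B$ via a structural classification: commutativity of a nontrivial $\mathcal{P}_y$ forces $M_y=\{d\}$, then $d$ must be a simple directed cycle, the $\sim$-class $C$ of $y$ consists precisely of the vertices of $d$, and $C$ carries no arrows other than the cycle arrows (including no parallel ones); consequently $\mathcal{P}_v$ is free on one generator for every $v\in C$, in particular for $v=x$. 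Your argument is considerably longer, and two of its steps are left slightly implicit (that $h(\alpha)$ lies on the cycle when you route ``$d$ back to $y$'', and that a closed path at $v\in C$ never leaves $C$ --- both immediate from $C$ being a $\sim$-class), but it is self-contained and proves strictly more than the lemma: the statement that a $\sim$-class of $B$-vertices is a chordless simple cycle is essentially the fact the paper invokes later, without detailed justification, at the end of the proof of Theorem~\ref{thm21} (uniqueness of the minimal oriented cycle through a vertex of $B$ and of the minimal path between $\sim$-equivalent $B$-vertices). In short, the paper's conjugation trick buys brevity; your classification buys reusable structure.
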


\begin{proof}
As $x\sim y$, there exist paths $\omega_{xy}:x\to y$ and $\omega_{yx}:y\to x$. Assume $x\in A$. Let $\omega_1$ and $\omega_2$ be two different minimal oriented cycles in $\mathcal{P}_x$. Then $\omega_{xy}\omega_1\omega_{yx}$ and $\omega_{xy}\omega_2\omega_{yx}$ are  two noncommuting elements in  
$\mathcal{P}_x$, proving $y\in A$. Claim now follows by symmetry.
\end{proof}

The following is our first main result.

\begin{theorem}\label{thm21}
Let $Q$ be a finite quiver and $\mathcal{P}$ the corresponding path semigroup. Then
\[\mathrm{eff.dim}_{\Bbbk_Q}(\mathcal{P})=|A|+n.\]
\end{theorem}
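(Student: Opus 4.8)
The plan is to prove the two inequalities separately. The lower bound $\mathrm{eff.dim}_{\Bbbk_Q}(\mathcal{P})\geq |A|+n$ is already supplied by Lemma~\ref{lem102}, so the entire content of the theorem lies in constructing an \emph{effective} representation of $\mathcal{P}$ of dimension exactly $|A|+n$. Accordingly, I would set the dimension vector $D_x=2$ for every $x\in A$ and $D_x=1$ for every $x\in B$, so that $\sum_{x\in Q_0}D_x=2|A|+|B|=|A|+n$, matching the bound. The task then reduces to assigning to each arrow $\alpha:x\to y$ a $D_y\times D_x$ matrix over $\Bbbk_Q$ in such a way that the induced homomorphism on $\mathcal{P}$ is injective, i.e.\ distinct nonzero paths are sent to distinct matrices (and every nonzero path is sent to a nonzero matrix, while the zero path $\mathtt{z}$ goes to $0$).

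First I would fix the block structure dictated by the dimension vector: an arrow $\alpha:x\to y$ is represented by a block of size $D_y\times D_x$, and I would use the transcendental variables $\mathbf{B}_\alpha=\{\tau_\alpha,\eta_\alpha,\zeta_\alpha\}$ attached to $\alpha$ to fill these entries. The natural choice is to imitate Lemma~\ref{lem105}: where both endpoints lie in $A$ (so the block is $2\times 2$) I would use the upper-triangular form $\twistmat{\tau_\alpha}{\eta_\alpha}$ as in that lemma, and where one or both endpoints lie in $B$ I would use the appropriate truncation ($1\times 2$, $2\times 1$, or the scalar $\tau_\alpha$) consistent with that pattern. The key algebraic point, exactly as in the proof of Lemma~\ref{lem105}, is that the distinguished ``$\eta$''-type entry of a product path records a sum $\sum_i \tau_{\alpha_1}\cdots\tau_{\alpha_{i-1}}\eta_{\alpha_i}\zeta_{\alpha_{i+1}}\cdots\zeta_{\alpha_l}$ of monomials in algebraically independent transcendentals, and such a sum determines the underlying sequence of arrows uniquely.

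Since injectivity of a representation of $\mathcal{P}$ is a statement about matching matrices, and any product of paths decomposes along the vertices it passes through, I would reduce the global injectivity to two checks. For paths lying in a single $\mathcal{P}_x$ with $x\in A$, Lemma~\ref{lem101} tells me $\mathcal{P}_x$ is free over $M_x$, so I can treat the minimal cycles as letters of an alphabet and invoke Lemma~\ref{lem105} directly to separate them; Lemma~\ref{lem107} guarantees that the notions of $A$ and $B$ are compatible along the $\sim$-relation, so the block sizes along any cycle are consistent. For the general case, I would argue that the source and target vertices of a path are read off from which diagonal idempotent blocks are nonzero, and that within fixed source and target the $\eta$-entry recovers the arrow sequence; combining these recovers the entire path. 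The commutativity constraint for $x\in B$ (where $D_x=1$ forces scalar action) is automatically satisfiable precisely because Lemma~\ref{lem101}(iii) says $|M_x|\leq 1$ for such $x$, so no two distinct minimal cycles at a $B$-vertex need to be separated by noncommuting scalars.

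The main obstacle I anticipate is the bookkeeping of injectivity \emph{across} different source/target pairs and across vertices of mixed type, rather than the single-monoid case which Lemma~\ref{lem105} handles cleanly. Specifically, I must ensure that a path from $x$ to $y$ is never confused with a path from $x'$ to $y'$ even when their representing matrices occupy overlapping block positions, and that paths passing through $B$-vertices (where information is ``compressed'' into scalars) do not collapse distinct paths together. The resolution is to observe that a nonzero path has a well-defined support on the diagonal idempotents and that the transcendence of the parameters prevents accidental coincidences; making this precise—showing that the full collection of matrix entries, read as polynomials in the independent variables $\mathbf{B}$, separates all nonzero paths—is where the real work lies, and it is essentially a careful generalization of the displayed formula in Lemma~\ref{lem105} to the quiver setting.
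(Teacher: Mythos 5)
Your construction and dimension vector coincide exactly with the paper's: the lower bound comes from Lemma~\ref{lem102}, you set $D_x=2$ for $x\in A$ and $D_x=1$ for $x\in B$, and you fill the arrow blocks with the variables $\mathbf{B}_\alpha$ in the pattern of Lemma~\ref{lem105}. But the proposal stops precisely where the content of the theorem begins: you yourself say that proving injectivity ``is where the real work lies'' and you never carry it out, so what you have is a plan, not a proof. Worse, the one mechanism you offer --- ``the transcendence of the parameters prevents accidental coincidences'' --- would \emph{fail} in the hardest case. If a path travels through $B$-vertices inside a single $\sim$-equivalence class, every arrow along it is represented by a $1\times 1$ scalar $(\tau_\alpha)$, and scalars commute: algebraic independence can never distinguish $\tau_\alpha\tau_\beta$ from $\tau_\beta\tau_\alpha$, so two parallel paths using the same arrows in different orders would be represented by identical matrices. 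The paper resolves this case not by transcendence at all but by a structural fact about the quiver: for $x,y\in B$ in one $\sim$-class there is a unique minimal oriented cycle $q$ at $x$ and hence a unique minimal path $p$ from $x$ to $y$, so every path $x\to y$ has the form $pq^l$ and distinct parallel paths of equal length simply do not exist. Nothing in your proposal supplies this; your appeal to Lemma~\ref{lem101}\eqref{lem101.3} concerns a single monoid $\mathcal{P}_x$, not paths between two different $B$-vertices.

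Your other claim, that ``within fixed source and target the $\eta$-entry recovers the arrow sequence,'' is likewise only valid when every vertex along the path lies in $A$, since only then are all blocks $2\times 2$ upper triangular and the formula of Lemma~\ref{lem105} applies. For mixed paths the paper needs genuine machinery that your sketch does not touch: take a minimal counterexample $R(\omega)=R(\omega')$, decompose $\omega=\omega_1\beta_1\omega_2\beta_2\cdots\omega_k$ along $\sim$-equivalence classes with connecting arrows $\beta_i$, show $\omega'$ decomposes with the \emph{same} $\beta_i$ (by variable dependence), and then strip off $\omega_1\beta_1$ to contradict minimality. Two delicate steps occur there: (a) when $R(\beta_1)$ is not injective (the $1\times 2$ block arising for $t(\beta_1)\in A$, $h(\beta_1)\in B$), one must show by explicit computation --- using that the remaining entries do not involve $\tau_{\beta_1},\zeta_{\beta_1}$ --- that equality of the collapsed products forces equality of the factors; (b) when $\omega_1$ is nontrivial, one needs $R(\omega_1)$ invertible (via Lemma~\ref{lem107}) and a further variable-dependence argument to conclude $R(\omega_1)=R(\omega_1')$ before the induction can proceed. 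These arguments constitute the proof of Theorem~\ref{thm21}; without them the upper bound, and hence the theorem, is not established.
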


\begin{proof}
We only need to show that the bound given by  Lemma~\ref{lem102} is sharp.
To do this we construct an effective matrix representation $V$ of $\mathcal{P}$ as follows: set 
\[D_x=\dim(V_x)=\begin{cases}2,&x\in A;\\ 1,& x\in B;\end{cases}\]
with a fixed basis in each $V_x$. To each $\alpha\in Q_1$ we assign a $\Bbbk_Q$-matrix with
$D_{h(\alpha)}$ rows and $D_{t(\alpha)}$ columns by the following rule:
\begin{itemize}
\item If $h(\alpha_i),t(\alpha_i)\in A$, then we assign to $\alpha$ the matrix
$\left(\begin{array}{cc}\tau_{\alpha}&\eta_{\alpha}\\0&\zeta_{\alpha}\end{array}\right)$. 
\item If $h(\alpha_i)\in A$ and $t(\alpha_i)\in B$,
then we assign to $\alpha$ the matrix $(\tau_{\alpha}\,\,\,\zeta_{\alpha})$.
\item If $h(\alpha_i)\in B$ and $t(\alpha_i)\in A$,
then we assign to $\alpha$ the matrix
$\left(\begin{array}{c}\tau_{\alpha}\\\zeta_{\alpha}\end{array}\right)$. 
\item If $h(\alpha_i),t(\alpha_i)\in B$,
then we assign to $\alpha$ the matrix $(\tau_{\alpha})$.
\end{itemize}
Finally, to each $\varepsilon_x$ we assign the identity matrix of size $D_x$ and to each path of length more than $1$ the corresponding product of the matrices assigned to arrows which this path consists of. It is obvious that this
gives a well-defined representation of $\mathcal{P}$. It remains to show that this representation sends different
elements of $\mathcal{P}$ to different linear operators.

Let $x,y\in Q_0$ and $\omega$ be an oriented paths from $x$ to $y$. Directly from the above construction it follows
that each coefficient of the matrix representing $\omega$ is a homogeneous polynomial in elements from $\mathbf{B}$.
If this coefficient is nonzero (which is the case for all diagonal entries and all entries above the diagonal,
in case the latter exist), this polynomial has degree $\mathfrak{l}(\omega)$ and depends on at least one
element from $\{\tau_{\alpha},\eta_{\alpha},\zeta_{\alpha}\}$ for each arrow $\alpha$ in $\omega$.

Let $x,y\in Q_0$ and $\omega,\omega'$ be two paths from $x$ to $y$. We have to show that $\omega$ and $\eta$ are
represented by different linear operators. From the previous paragraph it follows that this is clear in the case
when $\omega$ and $\omega'$ have different lengths and in the case when one of these paths contains an arrow which 
is not contained in the other path. 

Assume that there exists $x,y\in Q_0$ and $\omega,\omega'$ two different paths from $x$ to $y$ such that
$R(\omega)=R(\omega')$. Without loss of generality we may assume that the pair
$(\mathfrak{l}(x),\mathfrak{l}(y))$ is minimal with respect to the lexicographic order.

Write $\omega$ in the form $\omega_1\beta_1\omega_2\beta_2\cdots\omega_{k-1}\beta_{k-1}\omega_k$ where 
$\omega_i$ are (possibly trivial) paths inside an equivalence class of the relation $\sim$ and
$\beta_i$ are arrow between equivalence classes. From the above it then follows that
$\omega'$ can similarly be written as $\omega'_1\beta_1\omega'_2\beta_2\cdots\omega'_{k-1}\beta_{k-1}\omega'_k$.

Assume $\omega_1$ is a trivial path. Then $\omega$ has no arrow starting from the $\sim$-equivalence class
of $h(\beta_1)$. From the above we get that $\omega'$ has no arrow starting from the $\sim$-equivalence class
of $h(\beta_1)$ and hence $\omega'_1$ is a trivial path as well.  We claim that this implies
\begin{equation}\label{eq1}
R(\omega_2\beta_2\cdots\omega_{k-1}\beta_{k-1}\omega_k)=R(\omega'_2\beta_2\cdots\omega'_{k-1}\beta_{k-1}\omega'_k)
\end{equation}
which would then contradict the minimality of $(\mathfrak{l}(x),\mathfrak{l}(y))$. To prove \eqref{eq1},
the only non-trivial case to consider is when $R(\beta_1)$ is not injective, that is 
$t(\beta_1)\in A$ and $h(\beta_1)\in B$. Assume 
\begin{displaymath}
R(\omega_2\beta_2\cdots\omega_{k-1}\beta_{k-1}\omega_k)=
\left(\begin{array}{cc}a&b\\0&c\end{array}\right)\neq
\left(\begin{array}{cc}a'&b'\\0&c'\end{array}\right)=
R(\omega'_2\beta_2\cdots\omega'_{k-1}\beta_{k-1}\omega'_k).
\end{displaymath}
Then none of $a,b,c,a',b',c'$ depends on $\tau_{\beta_1}$ or $\zeta_{\beta_1}$ and hence we have
\begin{multline*}
R(\omega)=
\left(\begin{array}{cc}\tau_{\beta_1}&\zeta_{\beta_1}\end{array}\right)
\left(\begin{array}{cc}a&b\\0&c\end{array}\right)=
\left(\begin{array}{cc}\tau_{\beta_1}a&\tau_{\beta_1}b+\zeta_{\beta_1}c\end{array}\right)\neq\\\neq
\left(\begin{array}{cc}\tau_{\beta_1}a'&\tau_{\beta_1}b'+\zeta_{\beta_1}c'\end{array}\right)=
\left(\begin{array}{cc}\tau_{\beta_1}&\zeta_{\beta_1}\end{array}\right)
\left(\begin{array}{cc}a'&b'\\0&c'\end{array}\right)=R(\omega'),
\end{multline*}
a contradiction.

Therefore $\omega_1$ is non-trivial and thus $R(\omega_1)$ is invertible by construction and Lemma~\ref{lem107} 
as both the starting point and the ending point of $\omega_1$  belong to the same $\sim$-equivalence class.
Multiplying with $R(\omega_1)^{-1}$ we get
\begin{displaymath}
R(\beta_1\omega_2\beta_2\cdots\omega_{k-1}\beta_{k-1}\omega_k)=
R(\omega_1)^{-1}R(\omega'_1)R(\beta_1\omega'_2\beta_2\cdots\omega'_{k-1}\beta_{k-1}\omega'_k).
\end{displaymath}
Note that the left hand side does not depend on elements in $\mathbf{B}_{\alpha}$ for $\alpha$ occurring in 
$\omega_1$. Hence the right hand side does not depend on these elements either which forces the injective linear map
$R(\omega_1)^{-1}R(\omega'_1)$ to be the identity linear map as the image of the linear map
$R(\beta_1\omega'_2\beta_2\cdots\omega'_{k-1}\beta_{k-1}\omega'_k)$ is nonzero by construction. Therefore 
in this case we have the equality
$R(\omega_1)=R(\omega'_1)$. If $\beta_1\omega_2\beta_2\cdots\omega_{k-1}\beta_{k-1}\omega_k$
or $\beta_1\omega'_2\beta_2\cdots\omega'_{k-1}\beta_{k-1}\omega'_k$ is non-trivial, the above gives
\begin{displaymath}
R(\beta_1\omega_2\beta_2\cdots\omega_{k-1}\beta_{k-1}\omega_k)=
R(\beta_1\omega'_2\beta_2\cdots\omega'_{k-1}\beta_{k-1}\omega'_k)
\end{displaymath}
which contradicts minimality of $(\mathfrak{l}(x),\mathfrak{l}(y))$. Hence $\omega=\omega_1$
and $\omega'=\omega'_1$.

If $x\in A$, then $R(\omega)=R(\omega')$ implies $\omega=\omega'$ by Lemma~\ref{lem105}, a contradiction.
Therefore $x,y\in B$. In this case there is a unique minimal oriented cycle $q$ from $x$ to $x$ 
($q$ may be a trivial path) and hence
a unique path $p$ of minimal length from $x$ to $y$ (for otherwise, composing two different such minimal 
paths from $x$ to $y$ with a minimal path from $y$ to $x$ we would get
two minimal oriented cycles from $x$ to $x$). Any path from $x$ to $y$ has thus the form $pq^l$ for some
positive integer  $l$. In particular, two paths of the same length from $x$ to $y$ must coincide,
which contradicts our choice of $\omega$ and $\omega'$. This final contradiction completes the proof of the
theorem.
\end{proof}

\section{Truncated path semigroups}\label{s3}

As truncated path semigroups are obtained by adding some relations to usual path semigroups, it is reasonable
to expect that the effective dimension increases, e.g.  compare the statements of Theorem~\ref{thm21}
above with the results of \cite[Subsection~8.2]{MS}.

Let $\Bbbk$ be any field, $N\in\mathbb{N}$ and $V$ a representation of $\mathcal{P}_N$.
For every $k\in\mathbb{N}_0$ let $W^{(k)}=\mathrm{span}\set{\omega V\,|\,\omega\in\mathcal{P},\,\,l(\omega)=k}$. 
By convention, $\omega=\mathtt{z}$ 
when $\mathfrak{l}(\omega)\geq N$, which gives $W^{(N)}=0$. Thus we get the chain of subspaces
\[V=W^{(0)}\supset W^{(1)}\supset\cdots\supset W^{(N-1)}\supset W^{(N)}=0.\]
For every $x\in Q_0$ set $W_x^{(k)}:=V_x\cap W^{(k)}$ and choose \emph{some}  
$V_x^{(k)}\subset W^{(k)}_x$ such that $W_x^{(k)}=V_x^{(k)}\oplus W_x^{(k+1)}$.
Set $\displaystyle V^{(k)}:=\bigoplus_{x\in Q_0}V_x^{(k)}$. This gives 
the vector space decompositions
\[V=\bigoplus_{i=0}^{N-1}V^{(i)}=\bigoplus_{x\in Q_0}V_x=\bigoplus_{\substack{0\leq i\leq N-1\\x\in Q_0}}V_x^{(i)}.\]
In any module $V$, let $D_x^{(i)}:=\dim(V_x^{(i)})$, which gives $D_x=\sum_{i=0}^{N-1}D_x^{(i)}$.
From the definition of $W^{(i)}$ for any $\omega\in\mathcal{P}$ we have 
$\omega W^{(i)}\subset W_{h(\omega)}^{(i+\mathfrak{l}(\omega))}$.

\begin{lemma}\label{lem202}
Let $x\in Q_0$ be such that there are paths $\omega_l$, $\omega_r$ and $0\leq k<N$ such that 
$\mathfrak{l}(\omega_l)=k$, $\mathfrak{l}(\omega_r)=N-1-k$ and $h(\omega_l)=t(\omega_r)=x$. 
Then $D_x^{(k)}\geq 1$ for every effective $\mathcal{P}$-module $V$.
\end{lemma}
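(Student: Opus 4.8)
The plan is to reduce the inequality $D_x^{(k)}\ge 1$ to the strictness of a single inclusion in the filtration, and then to produce an explicit vector witnessing that strictness. Since $D_x^{(k)}=\dim V_x^{(k)}$ and $W_x^{(k)}=V_x^{(k)}\oplus W_x^{(k+1)}$, the claim $D_x^{(k)}\ge 1$ is equivalent to the assertion that the inclusion $W_x^{(k+1)}\subseteq W_x^{(k)}$ is proper. Thus it suffices to exhibit a vector lying in $W_x^{(k)}$ but not in $W_x^{(k+1)}$.

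First I would concatenate the two given paths. Because $h(\omega_l)=t(\omega_r)=x$, the paths $\omega_l$ and $\omega_r$ are composable, and the composite $\omega:=\omega_r\omega_l$ (traverse $\omega_l$ first, then $\omega_r$) is a path of length $\mathfrak{l}(\omega_l)+\mathfrak{l}(\omega_r)=k+(N-1-k)=N-1$. Since $\mathfrak{l}(\omega)=N-1<N$, the path $\omega$ is a nonzero element of $\mathcal{P}_N$, and hence, by effectivity of $V$, it acts as a nonzero operator. I therefore fix a vector $v\in V$ with $\omega v\neq 0$.

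The heart of the argument is to track $v$ through the filtration using the containment $\omega W^{(i)}\subset W_{h(\omega)}^{(i+\mathfrak{l}(\omega))}$ recorded just before the lemma. Applying it to $\omega_l$ with $i=0$ and $v\in V=W^{(0)}$ gives $\omega_l v\in W_{h(\omega_l)}^{(k)}=W_x^{(k)}$, so $\omega_l v$ is a legitimate candidate. I then argue by contradiction: if $\omega_l v$ were to lie in $W_x^{(k+1)}\subset W^{(k+1)}$, the same containment applied to $\omega_r$ (of length $N-1-k$) would force $\omega v=\omega_r(\omega_l v)\in W^{(k+1+(N-1-k))}=W^{(N)}=0$, contradicting $\omega v\neq 0$. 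Hence $\omega_l v\in W_x^{(k)}\setminus W_x^{(k+1)}$, the inclusion $W_x^{(k+1)}\subseteq W_x^{(k)}$ is proper, and $D_x^{(k)}\ge 1$.

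The only genuinely delicate point is the bookkeeping of the degree shift: one must confirm that composing the two paths lands at total length exactly $N-1$, so that $\omega$ survives in $\mathcal{P}_N$ and effectivity applies, while pushing $\omega_l v$ one further step via $\omega_r$ overshoots to length $N$, so that it would die in $W^{(N)}=0$. The remaining ingredients, namely composability of $\omega_l$ and $\omega_r$ from the head/tail data and the factorization $\omega v=\omega_r(\omega_l v)$ coming from $R$ being a homomorphism, are immediate, so I expect no substantive obstacle beyond this degree count.
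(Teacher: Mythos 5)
Your proof is correct and is essentially the paper's own argument in contrapositive form: the paper assumes $V_x^{(k)}=0$, notes $W_x^{(k)}=W_x^{(k+1)}$, and uses the same degree-shift containment to conclude $\omega_r\omega_l(V)\subset W^{(N)}=0$, contradicting effectiveness. Your rearrangement (picking $v$ with $\omega_r\omega_l v\neq 0$ and exhibiting $\omega_l v$ as a witness of the proper inclusion) is the same idea with identical ingredients.
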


\begin{proof} 
Assume $D_x^{(k)}=0$, that is $V_x^{(k)}=0$, and let $y=t(\omega_l)$ and $z=h(\omega_r)$. Then 
\[\omega_l (V)=\omega_l(V_y)=\omega_l(W_y^{(0)})\subset W_x^{(l(\omega_l))}=W_x^{(k)}=V_x^{(k)}\oplus W_x^{(k+1)}=W_x^{(k+1)}\mathrm{\ and}\]
\[\omega_r (W_x^{(k+1)})\subset W_z^{(k+1+l(\omega_r))}=W_z^{(k+1+N-1-k)}=W_z^{(N)}=0.\]
Thus $\omega_r\omega_l(V)=0$ and $\omega_r\omega_l$ acts as $\mathtt{z}$ on $V$ contradicting effectiveness.
\end{proof}

For $x\in Q_0$ define 
\begin{multline*}
K(x):=\big\{k\in \{0,\cdots,N-1\}\,|\,
\text{ there are paths } \omega_l,\omega_r\text{ such that }\\
\mathfrak{l}(\omega_l)=k,\mathfrak{l}(\omega_r)=N-1-k\text{ and }h(\omega_l)=t(\omega_r)=x\big\}. 
\end{multline*}
Set $B:=\{x\in Q_0\,|\, K(x)=\varnothing\}$ and $A:=Q_0\setminus B$. For $x\in A$ set
\[\underline{k}_x:=\min(K(x))\quad\text{ and } \quad \overline{k}_x:=\max(K(x)).\]
For $x\in Q_0$ define
\[l_x^-:=\sup\{\mathfrak{l}(\omega)\,|\,\omega\in\mathcal{P}\text{ and } h(\omega)=x\} \text{ and }
l_x^+:=\sup\{\mathfrak{l}(\omega)\,|\,\omega\in\mathcal{P}\text{ and } t(\omega)=x\}.\]
We are now ready to state our second main result.

\begin{theorem}\label{thm205}
Define $d_x:=\min\big\{l_x^-+1,l_x^++1,N,\max\{l_x^-+l_x^++2-N,1\}\big\}$. 
\begin{enumerate}[$($i$)$]
\item\label{thm205.1} For every effective $\mathcal{P}_N$-module $V$ over any field $\Bbbk$ we have $D_x\geq d_x$.
\item\label{thm205.2} If $\Bbbk$ has characteristic zero or is uncountable, then $D_x=d_x$ for some effective $\mathcal{P}_N$-module (over $\K)$ and
$\mathrm{eff.dim}_{\Bbbk}(\mathcal{P}_N)=\sum_{x\in Q_0}d_x$. 
\end{enumerate}
\end{theorem}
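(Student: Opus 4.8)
The plan is to first recast the number $d_x$ in transparent combinatorial terms, then treat the lower bound \eqref{thm205.1} and the construction \eqref{thm205.2} separately.

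\textbf{Reinterpreting $d_x$.} Write $a:=l_x^-$ and $b:=l_x^+$. A path of length $k$ with head $x$ exists iff $0\le k\le a$, and a path of length $N-1-k$ with tail $x$ exists iff $k\ge N-1-b$; hence $K(x)$ is the integer interval $[\max(0,N-1-b),\,\min(N-1,a)]$. A short case analysis (according as $a,b$ are $\le N-1$ or larger) shows that $\min(N-1,a)-\max(0,N-1-b)+1=\min\{a+1,b+1,N,a+b+2-N\}$ whenever the left-hand side is positive, and that positivity holds exactly when $x\in A$. Therefore $d_x=|K(x)|$ for $x\in A$ and $d_x=1$ for $x\in B$; the term $\max\{l_x^-+l_x^++2-N,1\}$ in the definition is precisely what produces the value $1$ in the latter case. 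I would isolate this identity as a preliminary lemma, since both parts read most cleanly through $|K(x)|$.

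\textbf{Part \eqref{thm205.1}.} For $x\in A$ and each $k\in K(x)$, Lemma~\ref{lem202} gives $D_x^{(k)}\ge 1$, so summing over the grading yields $D_x=\sum_{i=0}^{N-1}D_x^{(i)}\ge|K(x)|=d_x$. For $x\in B$ one has $d_x=1$ and $D_x\ge 1$ for every effective module (otherwise $\e_x$ and $\mathtt{z}$ would act identically, as in Lemma~\ref{lem102}). Finally, any injective homomorphism $\phi:\mathcal{P}_N\to\mathrm{Mat}_{m\times m}(\Bbbk)$ makes $\Bbbk^m$ an effective module; the idempotents $\phi(\e_x)$ are pairwise orthogonal (since $\e_x\e_y=\mathtt{z}$ for $x\neq y$), so their images are independent and $m\ge\sum_x D_x\ge\sum_x d_x$. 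Hence $\mathrm{eff.dim}_{\Bbbk}(\mathcal{P}_N)\ge\sum_{x}d_x$.

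\textbf{Part \eqref{thm205.2}: construction.} To realize equality I would build a graded module. Set $\Lambda_x:=K(x)$ for $x\in A$ and $\Lambda_x:=\{l_x^-\}$ for $x\in B$, and let $V_x$ have basis $\{v_x^{(k)}:k\in\Lambda_x\}$ with $v_x^{(k)}$ declared to lie in grade $k$; then $D_x=|\Lambda_x|=d_x$. To each arrow $\alpha:x\to y$ I assign the map whose entry from $v_x^{(k)}$ to $v_y^{(j)}$ is an independent generic scalar $c^{\alpha}_{j,k}$ when $j>k$ and $0$ otherwise, extend to paths by matrix multiplication and to $\e_x$ by the identity on $V_x$. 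Every arrow strictly raises the grade and the top grade is $N-1$, so any path of length $\ge N$ is sent to $0$ and this is automatically a representation of $\mathcal{P}_N$. The monotonicity $l_y^-\ge l_x^-+1$ and $l_x^+\ge l_y^++1$ along an arrow $\alpha:x\to y$ shows the grade sets shift upward compatibly, and the interval description of $K$ guarantees that each grade used is $\le N-1$ and that a genuinely nonzero path can be threaded through the available grades without running out of room: starting at the bottom grade of $\Lambda_x$, a path of length $\ell$ uses a starting grade $\le N-1-\ell$ because $\ell\le l_x^+$ (and, for $x\in B$, because $l_x^-+l_x^+\le N-2$).

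\textbf{Part \eqref{thm205.2}: injectivity and the field.} It remains to separate distinct elements of $\mathcal{P}_N$ for a suitable specialization of the $c^{\alpha}_{j,k}$ in $\Bbbk$. Since $\e_x$ acts as the identity on $V_x$ while every nontrivial path raises the grade, trivial paths, $\mathtt{z}$, and nontrivial paths are separated by their supports, so it suffices to separate two distinct paths $\omega,\omega'$ with the same tail and head. Each matrix entry of $R(\omega)$ is a sum, over strictly increasing grade sequences, of products $\prod_s c^{\alpha_s}_{g_s,g_{s-1}}$; because the scalars are indexed by (arrow, source grade, target grade) and the grades in each chain are distinct, such a monomial determines the ordered arrow sequence, so $R(\omega)$ and $R(\omega')$ have disjoint monomial supports and $R(\omega)-R(\omega')$ is a nonzero polynomial in the parameters; by the same token every nonzero path has a nonzero entry. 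As $\mathcal{P}_N$ is finite, effectiveness amounts to finitely many explicit nonzero polynomials being simultaneously nonvanishing. Over the purely transcendental extension $\Bbbk_Q$ the parameters are algebraically independent and this holds automatically, exactly as in Lemma~\ref{lem105}; the hypothesis that $\Bbbk$ has characteristic zero or is uncountable makes $\Bbbk$ infinite with arbitrarily many algebraically independent elements, so the specialization descends to $\Bbbk$ and the complement of the finitely many hypersurfaces is nonempty. This produces an effective module with $D_x=d_x$, whence $\mathrm{eff.dim}_{\Bbbk}(\mathcal{P}_N)\le\sum_x d_x$, and with part \eqref{thm205.1} equality follows.

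I expect the injectivity step to be the main obstacle, and inside it the bookkeeping that every nonzero path of length $<N$ yields a nonzero operator: when the target vertex has no grade immediately above the current one, the arrow must jump several grades, and one must verify---using the interval description of $K(x)$ together with the inequality $\ell\le l_x^+$---that such jumps never annihilate a path that should survive and never merge the monomial supports of two distinct paths.
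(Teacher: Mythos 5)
Your overall strategy is correct, and in part \eqref{thm205.2} it is genuinely different from the paper's. For part \eqref{thm205.1} both proofs rest on Lemma~\ref{lem202}, but your identity $K(x)=\set{k\in\mathbb{Z}\,:\,\max(0,N-1-l_x^+)\le k\le\min(N-1,l_x^-)}$ (valid because the lengths of paths ending, resp.\ starting, at $x$ form an initial segment of $\mathbb{N}_0$, as one sees by passing to subpaths) is cleaner than the paper's recursive ``remove a tail arrow, append a head arrow'' procedure, and it gives the equality $|K(x)|=d_x$ for $x\in A$ rather than just an inequality. In part \eqref{thm205.2} the paper sends $v_x^{(k)}$ under an arrow $\alpha$ to a scalar multiple of a \emph{single} basis vector (at the least available higher grade), takes the scalars to be distinct primes or algebraically independent transcendentals, and proves injectivity through an analysis of maximal paths; you instead take fully generic strictly grade-raising maps, separate distinct paths by the disjoint-monomial-support observation (which is correct: ordering the factors of a monomial by source grade recovers the arrow sequence), and then specialize by avoiding finitely many hypersurfaces, legitimate because $Q$ finite makes $\mathcal{P}_N$ finite. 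Your route needs only that $\Bbbk$ be infinite, so it is in fact slightly more general than the stated hypothesis; but your side remark that characteristic zero supplies ``arbitrarily many algebraically independent elements'' is false ($\mathbb{Q}$ has none) --- it is the hypersurface-avoidance argument, not transcendence, that does the work. (Also, orthogonality of the idempotents $\phi(\e_x)$ presupposes $\phi(\mathtt{z})=0$; first replace $\phi$ by $s\mapsto\phi(s)-\phi(\mathtt{z})$, which is again an injective homomorphism.)

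The one genuine gap is the step you yourself defer: that every path $\omega=\alpha_\ell\cdots\alpha_1$ with $\ell\le N-1$ admits grades $k_0<k_1<\cdots<k_\ell$ with $k_i\in\Lambda_{x_i}$, where $x_i=h(\alpha_i)$ and $x_0=t(\alpha_1)$. Without this nothing is known to act in a nonzero way, and it is exactly the analogue of the induction ``$w_i\neq 0$'' that occupies the final part of the paper's proof, so it cannot be left as an expectation. It does hold, by a greedy choice: set $k_0=\min\Lambda_{x_0}$ and, recursively, $k_i=l_{x_i}^-$ if $x_i\in B$ and $k_i=\max\set{k_{i-1}+1,\,N-1-l_{x_i}^+}$ if $x_i\in A$. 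Three facts verify it: (a) every element of $\Lambda_z$ is at most $l_z^-$, and $l_{x_i}^-\ge l_{x_{i-1}}^-+1$ along an arrow, so $k_{i-1}+1\le l_{x_i}^-$ (in particular $k_i>k_{i-1}$ at $B$-vertices); (b) $x_i\in A$ means precisely $l_{x_i}^-+l_{x_i}^+\ge N-1$, so $N-1-l_{x_i}^+\le l_{x_i}^-$ and hence $k_i\le l_{x_i}^-$, which puts $k_i$ inside the interval $\Lambda_{x_i}$ provided also $k_i\le N-1$; (c) the invariant $k_i\le N-1-(\ell-i)$, proved inductively from $l_{x_i}^+\ge\ell-i$ and, at $B$-vertices, from $l_{x_i}^-+l_{x_i}^+\le N-2$, supplies the bound needed in (b) and keeps the chain below the ceiling until the path ends. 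With this lemma inserted, your monomial-support argument gives effectiveness, and together with part \eqref{thm205.1} the theorem follows.
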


\begin{proof}
First we {\bf prove claim~\eqref{thm205.1}}. Let $x\in Q_0$. Then $x\in A$ or $x\in B$.
In any case, $D_x\geq |K(x)|$ by Lemma~\ref{lem202}. 

{\bf Assume first that $x\in A$.} Then $K(x)\neq\varnothing$ and 
it suffices to show that $|K(x)|\geq d_x$. As $K(x)\neq\varnothing$, there is 
some path of length $N-1$ passing through $x$, which means that $l_x^-+l_x^+\geq N-1$, in particular,
$l_x^-+l_x^+-N+2\geq 1$ and thus $\max\{l_x^-+l_x^++2-N,1\}=l_x^-+l_x^+-N+2$. 

Pick some paths $\omega_-,\omega_+$ such that $h(\omega_-)=t(\omega_+)=x$ and $l(\omega_\pm)=\min(l_x^\pm,N-1)$. 
Let $\omega_{\min(l_x^-,N-1)}$ be a path of length $N-1$ that starts with $\omega_-$ and continues into $\omega_+$ 
(if needed). From Lemma~\ref{lem202} we get $\min(l_x^-,N-1)\in K(x)$. Now we repeat recursively the following 
procedure as long as possible: Change $\omega_{k}$ to $\omega_{k-1}$ by removing the tail arrow and adding a new head 
arrow from $\omega_+$. On each step of this procedure we get a new $\omega_{k-1}$ with $k-1\in K(x)$. This procedure 
can stop for two reasons:
\begin{itemize}
\item There are no more arrows from $\omega_-$ to remove.
\item There are no more arrows from $\omega_+$ to add.
\end{itemize}
The first case (there are no more arrows from $\omega_-$ to remove) can only happen if the latest  
$k-1$ created is equal to $0$. In this case $K(x)\supset\{0,1,\cdots,\min(l_x^-,N-1)\}$ and hence 
$|K(x)|\geq\min(l_x^-+1,N)$ and we are done. 

We split the second case (there are no more arrows from $\omega_+$ to add) into two subcases.
The first subcase is that $\omega_{\min(l_x^-,N-1)}=\omega_-$, that is $l_x^-\geq N-1$. In this subcase 
we have $K(x)\supset\{N-1,N-2,\cdots,N-1-\min(l_x^+,N-1)\}$ which implies that  $|K(x)|\geq\min(l_x^++1,N)$
and we are done. 

The second subcase is when $\omega_{\min(l_x^-,N-1)}\neq\omega_-$. In this subcase we have  $l_x^-<N-1$ and 
\[K(x)\supset T:=\{l_x^-,l_x^--1,\cdots,N-1-\min(l_x^+,N-1)\}.\] 
Hence $|K(x)|\geq |T|=(l_x^-+\min(l_x^+,N-1)+2-N)$. If $\min(l_x^+,N-1)=l_x^+$, this gives
$|K(x)|\geq l_x^-+l_x^++2-N$ and we are done. If $\min(l_x^+,N-1)=N-1$, this gives
$|K(x)|\geq l_x^-+1$ and we are done. This completes verification of $D_x\geq|K(x)|\geq d_x$ for $x\in A$.

{\bf Assume now that $x\in B$.} In this case $l_x^-+l_x^++2-N\leq 0$ and $d_x=1$. The fact that $D_x\geq 1$
is clear as $\varepsilon_x$ acts as the identity on $V_x$ and this should be different from the action of 
$\mathtt{z}$ which acts as zero. This completes the proof of claim~\eqref{thm205.1} and implies
\[\mathrm{eff.dim}_{\Bbbk}(\mathcal{P}_N)\geq\sum_{x\in Q_0}d_x.\]
		
To {\bf prove claim~\eqref{thm205.2}} we assume that $\Bbbk$ has characteristic zero or is uncountable. We have to construct an
effective representation $V$ such that $D_x=d_x$ for every $x\in Q_0$. 
To do this we define the following:
\begin{itemize}
\item for $x\in A$ and $k\in K(x)$ let $V_x^{(k)}$ be the one-dimensional vector space with basis $\{v_x^{(k)}\}$;
\item for $x\in A$ and $k\not\in K(x)$ let $V_x^{(k)}$ be the zero vector space;
\item for $x\in B$ let $V_x$ be the one-dimensional vector space with basis $\{v_x\}$.
\end{itemize}
Set
\[V:=\big(\bigoplus_{\substack{0\leq i\leq N-1\\x\in A}}V_x^{(i)}\big)\oplus\big(\bigoplus_{x\in B}V_x\big).\]
Fix an injective map $(\alpha,k)\mapsto p_{\alpha,k}$ from the set of all pairs $(\alpha,k)$ where
$\alpha\in Q_1$  and $0\leq k\leq N$ to the set of positive integer prime numbers if $\Bbbk$ has charachteristic 0. In case $\Bbbk$ is uncountable we choose the codomain as a basis of a purely transcendental extension over its prime subfield by sufficiently many base elements. 
Define the action of $\mathcal{P}_N$ on $V$ as follows:
\begin{itemize}
\item the zero element of $\mathcal{P}_N$ acts as zero;
\item $\e_x$ acts as the identity on $V_x$ and as zero on $V_y$, $y\neq x$;
\item for every arrow  $\alpha:x\to y$ with $x,y\in A$ we have $\alpha:v_x^{(N-1)}\to 0$ and for
each $k\in K(x)$ we have 
$\alpha:v_x^{(k)}\to p_{\alpha,k}v_y^{(j)}$, where 
\[j=\min\{i\in \{k+1,k+2,\dots,N-1\}\,|\,V_y^{(i)}\neq 0\};\]
\item for every arrow $\alpha:x\to y$ with $x\in A$ and $y\in B$ and for each $k\in K(x)$
we have $\alpha:v_x^{(k)}\to p_{\alpha,k}v_y$;
\item for every arrow $\alpha:x\to y$ with $x\in B$ and $y\in A$ we have $\alpha:v_x\to p_{\alpha,0}v_y^{(\underline{k}_y)}$;
\item for every arrow $\alpha:x\to y$ with $x,y\in B$ we have $\alpha:v_x\to p_{\alpha,0}v_y$;
\item actions of paths of length greater than one are defined using composition of maps.			
\end{itemize}
Assume that $x,y\in A$ and $k\in K(x)$. Let $\omega_-$ and $\omega_+$ be two paths such that 
$\mathfrak{l}(\omega_-)=k$, $\mathfrak{l}(\omega_+)=N-1-k$ and $h(\omega_-)=t(\omega_+)=x$. 
Assume further that there is an arrow $\alpha$ from $x$ to $y$.
If $N-1-k\leq l_y^++1$, then without loss of generality we may assume that 
$\alpha$ is the first arrow in $\omega_+$. In this case we directly get $k+1\in K(y)$.
If $l_y^++1<N-1-k$, then any $k'\in K(y)$ satisfies $N-1-k'\leq l_y^+< N-1-k$ which implies $k'>k$.
Since $K(y)$ is not empty (as $y\in A$), we get that the set 
$\{i\in \{k+1,k+2,\dots,N-1\}|V_y^{(i)}\neq 0\}$ is non-empty. Therefore the above definitions make sense.

The only non-trivial relation to check is the fact that any path $\omega$ with $\mathfrak{l}(\omega)\geq N$ 
acts as zero. From the definition of $B$ it follows that each arrow in $\omega$ is an arrow between two vartices 
in $A$. From the definition of the action we then see that 
\[\omega(V_{t(\omega)}^{(i)})\subset V^{(i+\mathfrak{l}(\omega))}_{h(\omega)}.\]
This implies $\omega(V)\subset 0$ and thus $V$ is a $\mathcal{P}_N$-module.

It remains to show that our module is effective. For this we need to show that paths of length at most $N-1$
act in a non-zero way and pairwise differently. A path $\omega$ is said to be {\em maximal} if there 
is no arrow $\alpha$ such that $\alpha\omega$ or $\omega\alpha$ is nonzero. Note that if a path 
$\omega$ acts in a nonzero way,  then $h(\omega)$ can be recovered as the unique $y$ such that 
$\omega(V)\subset V_y$. Moreover, $t(\omega)$ can be recovered as the unique $x$ such that $\omega(V_x)\neq 0$.
Thus if two different paths $\omega_1$ and $\omega_2$ act equally and in a nonzero way, then they share the same 
head and the same tail. Furtheremore, the action of each maximal path $\omega_l\omega_1\omega_r$ coincides with
the action of $\omega_l\omega_2\omega_r$. Thus it suffices to show that all maximal paths act nonzero and differently. 

To simplify notation let
\[\hat{v}_x:=\begin{cases}v_x^{(\underline{k}_x)}, &x\in A;\\v_x,& x\in B; \end{cases}\qquad\qquad
\check{v}_y:=\begin{cases}v_y^{(\overline{k}_x)}, &y\in A;\\v_y,& y\in B. \end{cases}\]
Let $\omega=\alpha_{N-1}\alpha_{N-2}\cdots\alpha_2\alpha_{1}$ 
be a path of length $N-1$ and set $x_i:=h(\alpha_i)=t(\alpha_{i+1})$ with $x_0:=t(\alpha_{1})$.
Then from Lemma~\ref{lem202} and our construction we get that $V_{x_i}^{(i)}$ is nonzero for all $i$ and $\omega(\hat{v}_x)=p_{\alpha_1,0}p_{\alpha_2,1}\cdots p_{\alpha_{N-1},N-2}\check{v}_y$. Injectivity of the
map $(\alpha,k)\mapsto p_{\alpha,k}$ guarantees that the coefficient at $\check{v}_y$ uniquely determines
the sequence $(\alpha_1,0),(\alpha_2,1),\dots,(\alpha_{N-1},N-2)$ which uniquely deretmines $\omega$. 

Finally, assume that $\omega=\alpha_k\alpha_{k-1}\cdots\alpha_2\alpha_1$ for some $k<N-1$. 
Set $w_0=\hat{v}_x$ and $w_i=\alpha_{i}\cdots\alpha_2\alpha_1(\hat{v}_x)$ for $i=1,2,\dots,k$. 
Let us prove that $w_i$ is nonzero for all $i=0,1,2,\dots,k$ by induction. The basis is obvious.
Assume $w_i\neq 0$. If $\alpha_{i+1}$ is adjacent to at least one vertex in $B$, we have
$\alpha_{i+1}(w_i)\neq 0$ directly by construction. Assume now that $\alpha_{i+1}$ is an arrow between 
two vertices in $A$. By construction, the only basis element in $V_{t(\alpha_{i+1})}$ which 
$\alpha_{i+1}$ annihilates is the one which is in the image of some path of length $N-2$.
We have $i<N-2$. Hence $\alpha_{i+1} w_i\neq 0$ if $t(\alpha_{j})\in A$ for all  $j\leq i$.
Otherwise let $j$ be maximal such that $j\leq i$ and $t(\alpha_j)\in B$. Then, by construction,
$\alpha_j(w_{j-1})$ is a non-zero multiple of $\hat{v}_{h(\alpha_j)}$, which implies that
$w_i$ is not in the image of a path of length $N-2$ and therefore $\alpha_{i+1}(w_i)\neq 0$ again.
This shows that $\omega$ acts in a nonzero way on $V$. As $\omega$ is a maximal path of length
strictly less than $N-1$, it is uniquely determined by the arrows it consists of. Injectivity of the
map $(\alpha,k)\mapsto p_{\alpha,k}$ thus implies that $\omega$ is uniquely determined by the
prime decomposition of the coefficients in its matrix. This completes the proof.
\end{proof}

Theorem~\ref{thm205} implies the following stabilization property for $\mathrm{eff.dim}_{\Bbbk}(\mathcal{P}_N)$:

\begin{corollary}\label{cor217}
Assume that $\Bbbk$ has characteristic zero. Then there exist  $a,b\in\mathbb{N}_0$ such that 
\[\mathrm{eff.dim}_{\Bbbk}(\mathcal{P}_N)=aN+b\quad\text{ for all }\quad N\geq n.\]
\end{corollary}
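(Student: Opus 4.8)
The plan is to apply Theorem~\ref{thm205}\eqref{thm205.2}, which under the characteristic-zero hypothesis gives $\mathrm{eff.dim}_{\Bbbk}(\mathcal{P}_N)=\sum_{x\in Q_0}d_x$, and then to analyze each summand $d_x$ as a function of $N$. The crucial observation is that the quantities $l_x^-$ and $l_x^+$ are defined in terms of the full path semigroup $\mathcal{P}$ and hence do not depend on $N$; each may be finite or equal to $+\infty$. I would partition $Q_0$ into four classes according to which of $l_x^-,l_x^+$ are infinite, and show that on each class the summand $d_x$ is, for $N\geq n$, either a constant or equal to $N$.

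First I would treat the case $l_x^-=l_x^+=\infty$. Here $l_x^-+1=l_x^++1=\infty$ and $\max\{l_x^-+l_x^++2-N,1\}=\infty$, so the definition of $d_x$ collapses to $d_x=\min\{\infty,\infty,N,\infty\}=N$ for every $N$. Next, for the mixed cases $l_x^-=\infty$, $l_x^+<\infty$ (and symmetrically $l_x^-<\infty$, $l_x^+=\infty$) the second defining entry $l_x^++1$ (respectively $l_x^-+1$) is finite while the first and last are infinite, so $d_x=\min\{l_x^++1,N\}$ (respectively $\min\{l_x^-+1,N\}$). Finally, when both $l_x^-,l_x^+<\infty$ I would note that once $N\geq l_x^-+l_x^++1$ the term $\max\{l_x^-+l_x^++2-N,1\}$ equals $1$, forcing $d_x=1$.

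The key step making the threshold uniform is the elementary fact that finiteness of $l_x^-$ (respectively $l_x^+$) forces every path ending at (respectively starting from) $x$ to visit distinct vertices: if such a path repeated a vertex it would contain an oriented cycle that reaches $x$ (respectively is reachable from $x$), producing arbitrarily long such paths and contradicting finiteness. Hence $l_x^-\leq n-1$ whenever $l_x^-<\infty$, and likewise for $l_x^+$; in the both-finite case a repeated vertex along a concatenated path through $x$ would similarly yield a cycle through $x$, so that path is simple and $l_x^-+l_x^+\leq n-1$. Thus for $N\geq n$ each finite threshold $l_x^\pm+1$ and $l_x^-+l_x^++1$ is at most $n\leq N$, so $d_x$ has already stabilized: it equals $l_x^++1$, $l_x^-+1$, or $1$ in the two mixed cases and the both-finite case respectively, and equals $N$ in the doubly-infinite case.

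Summing over $x\in Q_0$ for $N\geq n$ then yields $\mathrm{eff.dim}_{\Bbbk}(\mathcal{P}_N)=aN+b$, where $a:=|\{x\in Q_0:l_x^-=l_x^+=\infty\}|$ counts the doubly-infinite vertices and $b$ is the sum of the stabilized constants over the remaining vertices; both are non-negative integers, as required. I expect the only genuine subtlety to be the verification that $n$ is a correct uniform threshold, that is, the simplicity argument bounding the finite $l_x^\pm$ by $n-1$; everything else is a routine unwinding of the piecewise definition of $d_x$.
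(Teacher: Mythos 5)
Your proposal is correct and follows essentially the same route as the paper: invoke Theorem~\ref{thm205}\eqref{thm205.2}, case-analyze each $d_x$ according to which of $l_x^-,l_x^+$ are infinite, bound the finite values by $n-1$ via the observation that a path of length at least $n$ contains a cycle which could be pumped, and take $a$ to be the number of doubly-infinite vertices and $b$ the sum of the stabilized constants. Your write-up merely spells out in more detail the cycle-pumping argument that the paper compresses into one clause, so no substantive difference remains.
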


\begin{proof}
For each $x$ the numbers $l_x^-$ and $l_x^+$ satisfy $l_x^-+l_x^+\in\{0,1,\cdots,n-1,\infty\}$
as any path of length at least $n$ must contain a subcycle. This means that we always have one
of the following three cases:
\begin{itemize}
\item Both $l_x^-$ and $l_x^+$ are finite, and thus $l_x^-+l_x^+\leq n-1$. 
Then for all $N>n$ we have $l_x^-+l_x^++2-N\leq 1$ and $d_x=1$.
\item Exactly one of $l_x^-,l_x^+$ is finite. Then $d_x=\min\{l_x^-,l_x^+\}+1$ for all $N\geq n$.
\item Both $l_x^-,l_x^+$ are infinite. Then $d_x=N$ for all $N\geq 1$.
\end{itemize}
Therefore we can take $a$ to be the number of $x$ such that both $l_x^-,l_x^+$ are infinite.
As $b$ we take the sum of $1$'s over all $x$ such that both $l_x^-$ and $l_x^+$ are finite
plus the sum of $\min\{l_x^-,l_x^+\}+1$ over all $x$ such that exactly one of $l_x^-$ and $l_x^+$ is finite.
The claim follows.
\end{proof}

From Corollary~\ref{cor217} it follows that to calculate $\mathrm{eff.dim}_{\Bbbk}(\mathcal{P}_N)$
for all $N\in\mathbb{N}$ it is enough to consider the cases $N=1,2,\cdots,n,n+1$.

\section{Examples}\label{s4}

\subsection{Quivers with cycles at each vertex}\label{s4.1}
Let $Q$ be a quiver in which every vertex is part of some (nontrivial) cycle or loop. Then $\mathrm{eff.dim}_\Bbbk(\mathcal{P_N})=Nn$ for $\Bbbk$ uncountable or of characteristic 0. Proof: Let $x\in Q_0$. Then $l_x^-=l_x^+=\infty$ and hence $d_x=N$. Sum over all vertices. This result is similar to \cite[Theorem~31]{MS}, but the set of fields $\Bbbk$ differ.  

\subsection{Quivers of type $A_n$}\label{s4.2}
A quiver $Q$ is said to be of type $A_n$ if the underlying unoriented graph is the Dynkin diagram $A_n$. Let $Q$ be of type $A_n$ and let $(n_1,n_2,\cdots,n_k)$ be the the number of vertices in the ordered segments. Then
\[\eff(\mathcal{P})=1+\sum_{N<n_i}(N(n_i+1-N)-1) +\sum_{n_i\leq N}(n_i-1).\]
Proof: Because local dimensions $d_x$ only depend on maximal paths in and out of $x$, different ordered segments can be counted independently, if we subtract the overlaps. Thus we need only to consider the case when $Q$ has one ordered segment. For a quiver of type $A_n$ with only one ordered segment (with vertices from $\mathbf{1}$ to $\mathbf{n}$) the picture is as follows, when $N<n$. When $n \leq N$ each $V_x$ is one-dimensional.

$\xymatrix{\mathbf{1}\ar[r]&\cdots\ar[r]&\mathbf{N}\ar[r]%
&\cdots\ar[r]&\mathbf{n-(N-1)}\ar[r]&\cdots\ar[r]&\mathbf{n}\\
V_{\mathbf{1}}^{(0)}\ar[dr]&\cdots\ar[dr]&V_\mathbf{N}^{(0)}\ar[dr]&\cdots\ar[dr]&V_\mathbf{n-(N-1)}^{(0)}\ar[dr]\\
&\cdots\ar[dr]&\cdots\ar[dr]&\cdots\ar[dr]&\cdots\ar[dr]&\cdots\ar[dr]&\\
&&V_\mathbf{N}^{(N-1)}&\cdots&V_\mathbf{n-(N-1)}^{(N-1)}&\cdots&V_\mathbf{n}^{(N-1)}\\
}$

\noindent
Department of Math., Uppsala University,
Box 480, SE-751 06, Uppsala, Sweden; e-mail: {\tt love.forsberg\symbol{64}math.uu.se}
\vspace{2mm}

\end{document}